\documentclass{scrartcl}
\title{Secondary Characteristic Classes of \\ Lie Algebra Extensions}
\author{Stefan Wagner}
\date{}

\usepackage{fixme}

\KOMAoptions{parskip=half, paper=a4, abstract=on}
\usepackage[latin1]{inputenc}
\usepackage{fixltx2e}
\usepackage{lmodern}
\usepackage{microtype}		
\usepackage[english]{babel}
\usepackage{hyperref}
\usepackage{amsmath, amsthm, amssymb, bbm, mathtools, nicefrac}
\usepackage{enumitem}

	\newlist{equivalence}{enumerate}{1}
	\setlist[equivalence]{label=\textnormal{(}\alph*\textnormal{)}}
\usepackage{xspace}
\usepackage{xifthen}

\theoremstyle{plain}
	\newtheorem{thm}{Theorem}[section]
	
	\newtheorem{lemma}[thm]{Lemma}
	\newtheorem{cor}[thm]{Corollary}

\theoremstyle{definition}
	\newtheorem{defn}[thm]{Definition}
	
	\newtheorem{expl}[thm]{Example}
\theoremstyle{remark}

\newcommand*{\R}{\mathbb R}		
	

\DeclareMathOperator{\ev}{ev}

\DeclareMathOperator{\Hom}{Hom}

\DeclareMathOperator{\End}{End}

\DeclareMathOperator{\ad}{ad}

\DeclareMathOperator{\sgn}{sgn}

\DeclareMathOperator{\Alt}{Alt}
\DeclareMathOperator{\Sym}{Sym}

\DeclareMathOperator{\der}{der}

\newcommand{\cf}{\mbox{cf.}\xspace}			
\newcommand{\eg}{\mbox{e.\,g.}\xspace}			
\newcommand*{\ie}{\mbox{i.\,e.}\xspace}			



\DeclarePairedDelimiterX{\lprod}[2]{\, \prescript{}{#1}{\langle}}{\rangle}{#2}
\DeclarePairedDelimiterX{\rprod}[2]{\langle}{\rangle_{#1}}{#2}

\begin{document}

\author{
	Stefan Wagner \thanks{
		Blekinge Tekniska H\"ogskola,
		\href{mailto:stefan.wagner@bth.se}{\nolinkurl{stefan.wagner@bth.se}}
	}
}
\sloppy
\maketitle

\begin{abstract}
	\noindent
	We introduce a notion of secondary characteristic classes of Lie algebra extensions. As a spin-off of our construction we obtain a new proof of Lecomte's generalization of the Chern--Weil homomorphism.

	\vspace*{0,5cm}

	\noindent
	Keywords: Secondary characteristic class, Lie algebra extension

	\noindent
	MSC2010: 17B56,  53A55 (primary), 18A05 (secondary)
\end{abstract}

\section{Introduction}
Characteristic classes are topological invariants of principal bundles and vector bundles associated to principal bundles. The theory of characteristic classes was started in the 1930s by Stiefel and Whitney. Stiefel studied certain homology classes of the tangent bundle $TM$ of a smooth manifold $M$, while Whitney considered the case of an arbitrary sphere bundle and introduced the concept of a characteristic cohomology class. In the next decade, Pontryagin constructed important new characteristic classes by studying the homology of real Grassman manifolds and Chern defined characteristic classes for complex vector bundles. Nowadays, characteristic classes are an important tool for many disciplines such as analysis, geometry and modern physics. For example, they provide a way to measure the non-triviality of a principal bundle respectively the non-triviality of an associated vector bundle. The Chern--Weil homomorphism of a principal bundle is an algebra homomorphism from the algebra of polynomials invariant under the adjoint action of a Lie group $G$ on the corresponding Lie algebra $\mathfrak{g}$, into the even de Rham cohomology $H_{\text{dR}}^{2\bullet}(M,\mathbb{K})$ of the base space $M$ of a principal bundle $P$ with structure group $G$. This map is achieved by evaluating an invariant polynom $f$ of degree $k$ on the curvature $\Omega$ of a connection $\omega$ on $P$ and thus obtaining a closed form on the base. 

In \cite{Lec85} Lecomte described a cohomological construction which generalizes the classical Chern--Weil homomorphism. In fact, Lecomte's construction associates characteristic classes to every Lie algebra extension and the classical construction of Chern and Weil arises in this context from the Lie algebra extension known as the \emph{Atiyah sequence} of a principal bundle.

In the 1970s, another set of characteristic classes called the \emph{secondary characteristic classes} has been discovered. These classes are also global invariants of principal bundles and are derived in a similar way as the primary characteristic classes from the curvature of adequate connection 1-forms. Secondary characteristic classes appear for example in the Lagrangian formulation of modern quantum field theories and most well-known might be the so-called \emph{Chern--Simons classes} (see \eg \cite{CS74,AzIz95,MaMa92} and references therein).

In this note we investigate secondary characteristic classes in the context of Lie algebra extensions which will also yield a new proof of Lecomte's generalization of the Chern--Weil homomorphism. Moreover, we would like to point out that our construction may be used to associate characteristic classes to split Lie algebra extensions, that is, to semidirect products of Lie algebras, because in this situation the primary characteristic classes vanish. From this perspective our results nicely complement the Lie algebraic theory of characteristic classes. More detailedly, the paper is organized as follows.

After this introduction and some preliminaries, we study in Section \ref{sec:cov+cur} the geometric notions of covariant derivative and curvature from an Lie algebraic point of view. 
In Section \ref{sec:lec} we pave the way for our main result and we give a short overview over Lecomte's generalization of the Chern--Weil homomorphism. Section  \ref{sec:main} is finally devoted to secondary characteristic classes of Lie algebra extensions. To be more precise, we first introduce the so-called \emph{Bott--Lecomte} homomorphism and show that this map, in fact, gives rise to classes in Lie algebra cohomology (Theorem~\ref{thm:main thm}). As a spin-off of our construction we obtain a new proof of Lecomte's generalization of the Chern--Weil homomorphism (Corollary~\ref{cor1} and Corollary~\ref{cor2}). Last but not least we introduce a notion of secondary characteristic classes of Lie algebra extensions and conclude with an example involving the oscillator algebra.

\section{Preliminaries and Notations}\label{sec:pre+not}

In this section we provide the most important definitions and notations of Lie algebra cohomology which are repeatedly used in this article. For a detailed background on Lie algebra cohomology we refer, for example, to \cite{ChEi48,AzIz95}.

\begin{enumerate}
	\item
		Let $V,W$ be vector spaces and $p\in\mathbb{N}_0$. We call a $p$-linear map $f:W^p\rightarrow V$ \emph{alternating} if $f(w_{\sigma(1)},\ldots,w_{\sigma(p)})=\sgn(\sigma)\cdot f(w_1,\ldots,w_p)$ for all $w_1,\ldots,w_p \in W$ and all permutations $\sigma\in S_p$. Furthermore, we write $\Alt^p(W,V)$ for the space of alternating $p$-linear maps $W^p\rightarrow V$. Given a $p$-linear map $f:W^p\rightarrow V$, it is easily checked that 
\begin{align*}
\Alt(f):=\sum_{\sigma\in S_p}\sgn(\sigma)\cdot f^{\sigma} 
\end{align*}
defines an element in $\Alt^p(W,V)$, where $f^{\sigma}(w_1,\ldots,w_p):=f(w_{\sigma(1)},\ldots,w_{\sigma(p)})$. Of particular interest are also the \emph{symmetric} $p$-linear maps $W^p\rightarrow V$ which are defined accordingly and denoted by $\Sym^p(W,V)$. We will sometimes consider an element $f \in \Sym^p(W,V)$ as a linear map defined on the \emph{symmetric tensor product} $S^p(W)$, that is, as a map $\tilde{f}:S^p(W)\rightarrow V$ satisfying $\tilde{f}(w_1\otimes_s\cdots\otimes_s w_p)=f(w_1,\ldots,w_p)$  for all $w_1,\ldots,w_p \in W$.

	\item
		Suppose that $\mathfrak{g}$ and $V_i$, $i=1,2,3$, are vector spaces and that $m:V_1\times V_2\rightarrow V_3$, $(v_1,v_2)\mapsto v_1\cdot_{m}v_2$ is a bilinear map. For $\alpha\in\Alt^p(\mathfrak{g},V_1)$ and $\beta\in\Alt^q(\mathfrak{g},V_2)$ we define the \emph{wedge product} $\alpha\wedge_m\beta\in\Alt^{p+q}(\mathfrak{g},V_3)$ by putting
\begin{align*}
\alpha\wedge_m\beta:=\frac{1}{p!q!}\Alt(\alpha\cdot_m\beta), 
\end{align*}
where $(\alpha\cdot_m\beta)(x_1,\ldots,x_{p+q}):=\alpha(x_1,\ldots,x_p)\cdot_m\beta(x_{p+1},\ldots,x_{p+q})$. The wedge products $\wedge_{\otimes_s}$ induced by the canonical multiplications $S^p(\mathfrak{g})\times S^q(\mathfrak{g})\rightarrow S^{p+q}(\mathfrak{g})$, $(x,y)\mapsto x\otimes_s y$ will be of particular interest to us.

	\item
		Let $\mathfrak{g}$ be a Lie algebra, $V$ a $\mathfrak{g}$-module and $p\in\mathbb{N}_0$. We denote the space of alternating $p$-linear mappings $\mathfrak{g}^p\rightarrow V$ by $C^p(\mathfrak{g},V):=\Alt^p(\mathfrak{g},V)$ and call its elements $p$-\emph{cochains}. 
On each $C^p(\mathfrak{g},V)$ we define the \emph{Chevalley-Eilenberg differential} $d_{\mathfrak{g}}^p=d_{\mathfrak{g}}$ by
\begin{align}
d_{\mathfrak{g}}\omega(x_0,\ldots,x_p)&:=\sum^p_{j=0}(-1)^jx_j.\omega(x_0,\ldots,\widehat{x}_j,\ldots,x_p)\notag\\
&+\sum_{i<j}(-1)^{i+j}\omega([x_i,x_j],x_0,\ldots,\widehat{x}_i,\ldots,\widehat{x}_j,\ldots,x_p),\notag
\end{align}
where $\widehat{x}_j$ means that $x_j$ is omitted. Observe that the right hand side defines for each $\omega\in C^p(\mathfrak{g},V)$ an element of $C^{p+1}(\mathfrak{g},V)$ because it is alternating. Putting all differentials together, we obtain a linear map $d_{\mathfrak{g}}:C(\mathfrak{g},V)\rightarrow C(\mathfrak{g},V)$. The elements of the subspace $Z^p(\mathfrak{g},V):=\ker({d_{\mathfrak{g}}}_{\mid C^p(\mathfrak{g},V)})$
are called $p$-\emph{cocycles}, and the elements of the spaces
\begin{align*}
B^p(\mathfrak{g},V):=d_{\mathfrak{g}}(C^{p-1}(\mathfrak{g},V)) \quad \text{and} \quad B^0(\mathfrak{g},V):=\{0\}
\end{align*}
are called $p$-\emph{coboundaries}. It can be shown that $d_{\mathfrak{g}}^2=0$, which implies that $B^p(\mathfrak{g},V)\subseteq Z^p(\mathfrak{g},V)$, so that it makes sense to define the $p^{\text{th}}$-\emph{cohomology space of} $\mathfrak{g}$ \emph{with values in the module} $V$, that is, $H^p(\mathfrak{g},V):=Z^p(\mathfrak{g},V)/B^p(\mathfrak{g},V)$.

	\item
		Let  $\mathfrak{g}$ and $V$ be Lie algebras. Furthermore, consider $V$ as a trivial $\mathfrak{g}$-module. Then the \emph{curvature} $R_{\sigma} \in C^2(\mathfrak{g},V)$ of an element $\sigma\in C^1(\mathfrak{g},V)$ is defined by
\begin{align*}
R_{\sigma}:=d_{\mathfrak{g}}\sigma+\frac{1}{2}[\sigma,\sigma], \quad \ie, \quad R_{\sigma}(x,y)=[\sigma(x),\sigma(y)]-\sigma([x,y]) \qquad \forall x,y \in \mathfrak{g},
\end{align*}
		
\end{enumerate}

\section{Covariant Derivatives and Curvature}\label{sec:cov+cur}

The purpose of this section is to study the geometric notions of covariant derivative and curvature from an Lie algebraic point of view. Indeed, let $\mathfrak{g}$ be a Lie algebra and $V$ a vector space, considered as a trivial $\mathfrak{g}$-module. We first twist the corresponding Chevalley--Eilenberg complex $(C^\bullet(\mathfrak{g},V),d_{\mathfrak{g}})$ with a linear map $S:\mathfrak{g}\rightarrow\End(V)$ and note that the bilinear evaluation map $\ev:\End(V)\times V\rightarrow V$ yields a linear operator
\begin{align*}
S_{\wedge}:C^p(\mathfrak{g},V)\rightarrow C^{p+1}(\mathfrak{g},V), \quad \alpha\mapsto S\wedge_{\ev}\alpha.
\end{align*}
The corresponding \emph{covariant derivative on} $C(\mathfrak{g},V)$ is then defined by
\begin{align*}
d_S:=S_{\wedge}+d_{\mathfrak{g}}:C^p(\mathfrak{g},V)\rightarrow C^{p+1}(\mathfrak{g},V) \qquad p\in\mathbb{N}_0.
\end{align*}
A few moments thought show that this can also be written as
\begin{align*}
d_S\alpha(x_0,\ldots,x_p)&:=\sum^p_{j=0}(-1)^j S(x_j).\alpha(x_0,\ldots,\widehat{x}_j,\ldots,x_p)\\
&+\sum_{i<j}(-1)^{i+j}\alpha([x_i,x_j],x_0,\ldots,\widehat{x}_i,\ldots,\widehat{x}_j,\ldots,x_p).
\end{align*}


The following result shows that the Lie algebraic notion of curvature satisfies a generalized Bianchi identity.

\begin{lemma}\label{lem:cur+bianchi}
Let $\sigma\in C^1(\mathfrak{g},V)$ and $S:=\ad\circ\sigma$. Then the corresponding curvature $R_{\sigma} \in C^2(\mathfrak{g},V)$ satisfies the abstract Bianchi identity $d_S R_{\sigma}=0$, that is,
\begin{align*}
\sum_{\text{cyc.}}[\sigma(x),R_{\sigma}(y,z)]-R_{\sigma}([x,y],z)=0 \qquad \forall x,y,z \in \mathfrak{g}.
\end{align*}
\end{lemma}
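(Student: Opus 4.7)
The plan is a direct computation with the explicit Chevalley--Eilenberg formula for $d_S$ applied to the 2-cochain $R_\sigma$. Writing $x_0=x$, $x_1=y$, $x_2=z$ and using that $S(x).v=[\sigma(x),v]$, the formula for $d_S$ gives
\begin{align*}
d_S R_\sigma(x,y,z)=\,&[\sigma(x),R_\sigma(y,z)]+[\sigma(y),R_\sigma(z,x)]+[\sigma(z),R_\sigma(x,y)]\\
&-R_\sigma([x,y],z)-R_\sigma([y,z],x)-R_\sigma([z,x],y),
\end{align*}
where the signs in the second row are straightened by using that $R_\sigma$ is alternating. This is exactly the cyclic sum displayed in the statement, so it suffices to prove it vanishes.

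Next I would substitute $R_\sigma(a,b)=[\sigma(a),\sigma(b)]-\sigma([a,b])$ into each of the six terms. This expansion splits naturally into three cyclic sums: first, the iterated bracket sum $\sum_{\mathrm{cyc.}}[\sigma(x),[\sigma(y),\sigma(z)]]$, which vanishes by the Jacobi identity in $V$; second, the term $\sum_{\mathrm{cyc.}}\sigma([[x,y],z])=\sigma\bigl(\sum_{\mathrm{cyc.}}[[x,y],z]\bigr)$, which vanishes by the Jacobi identity in $\mathfrak{g}$; and third, the mixed terms $-\sum_{\mathrm{cyc.}}[\sigma(x),\sigma([y,z])]$ and $-\sum_{\mathrm{cyc.}}[\sigma([x,y]),\sigma(z)]$.

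The remaining task is to see that the two mixed cyclic sums cancel. Applying antisymmetry of the bracket in $V$, the second rewrites as $\sum_{\mathrm{cyc.}}[\sigma(z),\sigma([x,y])]$, and after re-indexing the cyclic variable this is precisely the negative of the first. Hence all three cyclic sums vanish, which proves $d_S R_\sigma=0$.

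The only real obstacle is bookkeeping: keeping the signs from the definition of $d_S$ consistent with the alternating property of $R_\sigma$ when turning the differences $R_\sigma([x,z],y)$ etc.\ into the cyclic form, and correctly matching the two mixed sums after a cyclic relabeling. Once the expansion is written out in the symmetric cyclic form above, the proof reduces to two invocations of Jacobi (in $V$ and in $\mathfrak{g}$) plus an antisymmetry argument, with no further structural input required.
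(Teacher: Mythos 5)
Your proof is correct, but it takes a genuinely different and more elementary route than the paper. You expand $d_S R_\sigma(x,y,z)$ pointwise via the Chevalley--Eilenberg formula, check that the alternating property of $R_\sigma$ turns the second group of terms into the cyclic sum $-\sum_{\mathrm{cyc.}}R_\sigma([x,y],z)$, substitute $R_\sigma(a,b)=[\sigma(a),\sigma(b)]-\sigma([a,b])$, and cancel everything using the Jacobi identity in $V$, the Jacobi identity in $\mathfrak{g}$, and the antisymmetry-plus-relabeling argument for the two mixed cyclic sums; all of these steps check out (note that $V$ is indeed a Lie algebra here, since $S=\ad\circ\sigma$ presupposes a bracket on $V$, cf.\ item 4 of Section \ref{sec:pre+not}). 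The paper instead argues entirely at the level of cochains with the graded calculus it has set up: writing $R_\sigma=d_{\mathfrak{g}}\sigma+\tfrac12[\sigma,\sigma]$, it uses $d_{\mathfrak{g}}^2=0$, the identity $d_{\mathfrak{g}}[\sigma,\sigma]=2[d_{\mathfrak{g}}\sigma,\sigma]$, the observation $S\wedge R_\sigma=[\sigma,R_\sigma]$, and finally the graded Jacobi identity $[[\sigma,\sigma],\sigma]=0$. The structural proof is shorter and exercises the wedge/bracket formalism that the rest of the paper (Lemma \ref{lem:cov der} and Theorem \ref{thm:main thm}) relies on, whereas your computation is self-contained and avoids having to justify those graded identities, at the cost of heavier term-by-term bookkeeping; both arguments are valid.
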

\begin{proof}
First note that $d_{\mathfrak{g}}[\sigma,\sigma]=[d_{\mathfrak{g}}\sigma,\sigma]-[\sigma,d_{\mathfrak{g}}\sigma]=2[d_{\mathfrak{g}}\sigma,\sigma]$. Therefore, a few moments thought show that
\begin{align*}
d_SR_{\sigma}&=(d_{\mathfrak{g}}+S_{\wedge})R_{\sigma}=d_{\mathfrak{g}}^2\sigma+\frac{1}{2}d_{\mathfrak{g}}[\sigma,\sigma]+S\wedge R_{\sigma}=[d_{\mathfrak{g}}\sigma,\sigma]+[\sigma,R_{\sigma}]
\\
&=[d_{\mathfrak{g}}\sigma,\sigma]-[R_{\sigma},\sigma]=-\frac{1}{2}[[\sigma,\sigma],\sigma]=0.
\end{align*}
\end{proof}


\begin{lemma}\label{lem:cov der}
Given a bilinear map $m:V\times V\rightarrow V$ and a linear map $S:\mathfrak{g}\rightarrow\der(V,m)$, we have for $\alpha\in C^p(\mathfrak{g},V)$ and $\beta\in C^q(\mathfrak{g},V)$ the relation
\begin{align*}
d_S(\alpha\wedge_m\beta)=d_S\alpha\wedge_m\beta+(-1)^p\alpha\wedge_m d_S\beta.
\end{align*}
\end{lemma}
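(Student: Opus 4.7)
The plan is to decompose $d_S$ according to its defining formula $d_S = d_{\mathfrak{g}} + S_\wedge$ and to establish a graded Leibniz rule for each of the two summands separately. Since $V$ is a trivial $\mathfrak{g}$-module in the definition of $d_{\mathfrak{g}}$, the Chevalley--Eilenberg differential on $C^\bullet(\mathfrak{g},V)$ only carries the bracket contribution
\begin{align*}
d_{\mathfrak{g}}\omega(x_0,\ldots,x_p) = \sum_{i<j}(-1)^{i+j}\omega([x_i,x_j],x_0,\ldots,\widehat{x}_i,\ldots,\widehat{x}_j,\ldots,x_p).
\end{align*}
A standard and purely combinatorial computation with the alternation operator then yields
\begin{align*}
d_{\mathfrak{g}}(\alpha\wedge_m\beta)=d_{\mathfrak{g}}\alpha\wedge_m\beta+(-1)^p\alpha\wedge_m d_{\mathfrak{g}}\beta,
\end{align*}
simply because $m$ is bilinear and thus commutes with the reshuffling of the bracket argument into the remaining $p+q-1$ slots.

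The second ingredient is the analogous Leibniz identity
\begin{align*}
S_\wedge(\alpha\wedge_m\beta)=S_\wedge\alpha\wedge_m\beta+(-1)^p\alpha\wedge_m S_\wedge\beta.
\end{align*}
Unwinding the definition of $S_\wedge$ via the wedge product $\wedge_{\ev}$ associated with evaluation, one sees that $(S_\wedge\gamma)(x_0,\ldots,x_n)=\sum_{j=0}^n(-1)^j S(x_j).\gamma(x_0,\ldots,\widehat{x}_j,\ldots,x_n)$. Applying this to $\gamma=\alpha\wedge_m\beta$ places $S(x_j)$ on a value of the form $m(\alpha(\cdots),\beta(\cdots))$. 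Here the assumption $S(x_j)\in\der(V,m)$ is precisely what is needed: it lets one split $S(x_j).m(v_1,v_2)=m(S(x_j).v_1,v_2)+m(v_1,S(x_j).v_2)$, and after the combinatorial shuffling inherent in the wedge the two resulting sums reassemble into the right-hand side of the Leibniz identity. The sign $(-1)^p$ appears exactly as in the $d_{\mathfrak{g}}$ case, coming from moving the position index past the $p$-form $\alpha$.

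Adding the two identities gives the statement of the lemma. The only nontrivial step is the graded Leibniz rule for $S_\wedge$, which I expect to be the main obstacle in the sense that it requires carefully tracking the shuffle signs produced by $\Alt$ and checking that the derivation condition $S(x)\in\der(V,m)$ interacts with these signs in the correct way. Once this bookkeeping is done, the remaining computation is formal. A cleaner alternative I would also consider is to verify the identity on decomposable cochains of the form $\alpha=\xi\wedge_m v$ and $\beta=\eta\wedge_m w$ with $v,w\in V$ and scalar alternating forms $\xi,\eta$; in that presentation the derivation property of $S$ and the trivial-module Leibniz rule for $d_{\mathfrak{g}}$ decouple completely, and the full identity follows from multilinearity.
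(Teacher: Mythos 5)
Your proposal is correct and follows essentially the same route as the paper: split $d_S=d_{\mathfrak{g}}+S_{\wedge}$, note the Leibniz rule for $d_{\mathfrak{g}}$ (trivial module), and reduce the $S_{\wedge}$ part to the derivation property of $S(x)$ with respect to $m$. The paper just packages the sign bookkeeping you describe more compactly, by writing $S\cdot(\alpha\cdot_m\beta)=(S\cdot\alpha)\cdot_m\beta+(\alpha\cdot_m(S\cdot\beta))^{\sigma}$ for a cycle $\sigma$ of length $p+1$ and using $\sgn(\sigma)=(-1)^p$.
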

\begin{proof}
Since $d_S=d_{\mathfrak{g}}+S_{\wedge}$, the assertion is easily verified for $S=0$. It therefore remains to show that $S\wedge(\alpha\wedge_m\beta)=(S\wedge\alpha)\wedge_m\beta+(-1)^p\alpha\wedge_m(S\wedge\beta)$. For this first recall that
\begin{align*}
S\wedge(\alpha\wedge_m\beta)=\frac{1}{p!q!}\Alt(S\cdot(\alpha\cdot_m\beta))
\end{align*}
and note further that $S(\mathfrak{g})\subseteq\der(V,m)$ implies $S\cdot(\alpha\cdot_m\beta))=(S\cdot\alpha)\cdot_m\beta+(\alpha\cdot_m(S\cdot\beta))^{\sigma}$, where $\sigma=(1 2\ldots p+1)\in S_{p+q+1}$ is a cycle of length $p+1$. Hence, the claim is a consequence of $\sgn(\sigma)=(-1)^p$.
\end{proof}

\section{Lecomte's Generalization of the Chern--Weil map}\label{sec:lec}

In this short section we pave the way for our main result and we give a short overview over Lecomte's generalization of the Chern--Weil homomorphism. In particular, at this point Lie algebra extensions enter the picture.
Indeed, let
\begin{align*}
0 \longrightarrow \mathfrak{n} {\longrightarrow}\widehat{\mathfrak{g}} \stackrel{q} {\longrightarrow} \mathfrak{g}\longrightarrow 0
\end{align*}
be a Lie algebra extension and $V$ a $\mathfrak{g}$-module which we also consider as a $\widehat{\mathfrak{g}}$-module with respect to the action $x.v:=q(x).v$ for $x \in \widehat{\mathfrak{g}}$ and $v \in V$. Furthermore, let $\sigma:\mathfrak{g}\rightarrow\widehat{\mathfrak{g}}$ be a linear section of $q$ and $R_{\sigma} \in C^2(\mathfrak{g},\mathfrak{n})$ the corresponding curvature. Given $p \in \mathbb{N}_0$ and $f \in \Sym^p(\mathfrak{n},V)$, we put
\begin{align*}
f_{\sigma}=\tilde{f}\circ(R_{\sigma}\wedge_{\otimes_s}\cdots\wedge_{\otimes_s}R_{\sigma}) \in C^{2p}(\mathfrak{g},V)
\end{align*}
(\cf Section \ref{sec:pre+not}). Moreover, we write $\Sym^p(\mathfrak{n},V)^{\widehat{\mathfrak{g}}}$ for the set of $\widehat{\mathfrak{g}}$-invariant symmetric $p$-linear maps, that is, the set
\begin{align*}
&{\Big\{}f\in\Sym^p(\mathfrak{n},V) \,|\, x.f(y_1,\ldots,y_p)=\sum^p_{i=1}
f(y_1,\ldots,S(x)y_i,\ldots,y_p) \qquad \forall x \in \mathfrak{g}\Big{\}},
\end{align*}
where $S:\mathfrak{g} \rightarrow \der(\mathfrak{n})$ denotes the linear map defined by $S(x):=\ad(\sigma(x))$. Lecomte's generalization of the Chern--Weil map then reads as follows:

\begin{thm}\label{thm:lecomte}
\emph{(Lecomte \cite[Thm. 2.3]{Lec85})}
\begin{itemize}
\item[\emph{(a)}]
For each $p\in\mathbb{N}_0$, there is a natural map
\begin{align*}
C_p:\Sym^p(\mathfrak{n},V)^{\widehat{\mathfrak{g}}}\rightarrow H^{2p}(\mathfrak{g},V), \quad f\mapsto\frac{1}{p!}[f_{\sigma}],
\end{align*}
which is independent of the choice of the section $\sigma$.
\item[\emph{(b)}]
Suppose, in addition, that $m_V:V\times V\rightarrow V$ is an associative multiplication and that $\mathfrak{g}$ acts on $V$ by derivations, i.e., $m_V$ is $\mathfrak{g}$-invariant. Then $(C^{\bullet}(\mathfrak{g},V),\wedge_{m_V})$ is an associative algebra, inducing an algebra structure on $H^{\bullet}(\mathfrak{g},V)$. Further, $(\Sym^{\bullet}(\mathfrak{n},V),\vee_{m_V})$ is an associative algebra, and the maps $(C_p)_{p\in\mathbb{N}_0}$ combine to an algebra homomorphism
\begin{align*}
C:\Sym^{\bullet}(\mathfrak{n},V)^{\widehat{\mathfrak{g}}}\rightarrow H^{2\bullet}(\mathfrak{g},V).
\end{align*}
\end{itemize}
\end{thm}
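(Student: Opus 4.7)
The plan is to build both parts of the theorem on the two tools developed in Section~\ref{sec:cov+cur}: the Bianchi identity (Lemma~\ref{lem:cur+bianchi}) and the Leibniz rule (Lemma~\ref{lem:cov der}). Fix a section $\sigma$ and let $S := \ad\circ\sigma : \mathfrak{g} \to \der(\mathfrak{n})$; extending $S$ to a $\mathfrak{g}$-action by derivations on $S^\bullet(\mathfrak{n})$ makes the covariant derivative $d_S$ meaningful on $C^\bullet(\mathfrak{g}, S^\bullet(\mathfrak{n}))$. For the closedness of $f_\sigma$ in part~(a), the key observation is that $\widehat{\mathfrak{g}}$-invariance of $f$ is exactly the statement that $\tilde f : S^p(\mathfrak{n})\to V$ is $\mathfrak{g}$-equivariant, which unfolds directly into $d_{\mathfrak{g}}(\tilde f \circ \alpha) = \tilde f \circ d_S\alpha$ for every $\alpha \in C^\bullet(\mathfrak{g},S^p(\mathfrak{n}))$. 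Applying this identity to $\alpha := R_\sigma\wedge_{\otimes_s}\cdots\wedge_{\otimes_s}R_\sigma$ reduces closedness of $f_\sigma$ to $d_S(R_\sigma\wedge_{\otimes_s}\cdots\wedge_{\otimes_s}R_\sigma)=0$, and this follows by combining $d_S R_\sigma = 0$ (Lemma~\ref{lem:cur+bianchi}) with the fact that $d_S$ is a graded derivation of $\wedge_{\otimes_s}$ (Lemma~\ref{lem:cov der}, since each $S(x)$ is a derivation of $\otimes_s$).

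For the independence of the section in part~(a) I would run a transgression argument. Given a second section $\sigma'$, the difference $\eta := \sigma' - \sigma$ takes values in $\mathfrak{n}$ because $q\circ\eta = 0$, so the affine family $\sigma_t := \sigma + t\eta$ makes sense, and a direct calculation from the defining formula of the curvature gives $\partial_t R_{\sigma_t} = d_{S_t}\eta$, where $S_t := \ad\circ\sigma_t$. Combining this with Lemma~\ref{lem:cov der} and the $\widehat{\mathfrak{g}}$-equivariance of $\tilde f$, the derivative rearranges into
\begin{equation*}
\partial_t f_{\sigma_t} = p\, d_{\mathfrak{g}}\bigl(\tilde f \circ (\eta\wedge_{\otimes_s}R_{\sigma_t}\wedge_{\otimes_s}\cdots\wedge_{\otimes_s}R_{\sigma_t})\bigr),
\end{equation*}
which is polynomial in $t$. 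Formally integrating over $[0,1]$ coefficient by coefficient (the ground field has characteristic zero) then produces an explicit primitive $\tau \in C^{2p-1}(\mathfrak{g}, V)$ with $f_{\sigma'} - f_\sigma = d_{\mathfrak{g}}\tau$.

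Part~(b) reduces to a combinatorial identity at the cochain level: for $f \in \Sym^p(\mathfrak{n},V)^{\widehat{\mathfrak{g}}}$ and $g \in \Sym^q(\mathfrak{n},V)^{\widehat{\mathfrak{g}}}$, the cochains $(f\vee_{m_V}g)_\sigma$ and $f_\sigma\wedge_{m_V}g_\sigma$ agree up to exactly the factorial factor absorbed in the normalizations $\tfrac{1}{p!}, \tfrac{1}{q!}, \tfrac{1}{(p+q)!}$ defining $C_p, C_q, C_{p+q}$. This is a bookkeeping exercise comparing how the symmetrized product $\vee_{m_V}$ on $\Sym^\bullet(\mathfrak{n},V)$ interacts with the wedge product $\wedge_{\otimes_s}$ on symmetric-tensor-valued cochains; passing to cohomology then delivers the algebra homomorphism property. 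The main obstacle will be the transgression step in part~(a): one has to verify carefully that moving $\partial_t$ past $\tilde f$ and reassembling the terms via Leibniz genuinely produces a total $d_{\mathfrak{g}}$-derivative, rather than merely a cocycle difference, before the formal integration in $t$ can supply the primitive.
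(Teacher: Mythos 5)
Your treatment of part (a) is correct and is essentially the paper's own route: the closedness argument (invariance of $f$ intertwining $d_{\mathfrak{g}}$ with $d_S$, then the Bianchi identity of Lemma~\ref{lem:cur+bianchi} combined with the Leibniz rule of Lemma~\ref{lem:cov der}) and the affine-path transgression with $\partial_t R_{\sigma_t}=d_{S_t}\eta$ are exactly the $n=0$ and $n=1$ instances of Theorem~\ref{thm:main thm}, which the paper proves by the same simplex-integration idea and then specializes in Corollaries~\ref{cor1} and~\ref{cor2}. For part (b) you only indicate the normalization bookkeeping relating $\vee_{m_V}$ to $\wedge_{m_V}$, which is acceptable and in fact goes slightly beyond the paper, which merely remarks without proof that the $n=0$ Bott--Lecomte map is an algebra homomorphism.
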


\section{Secondary Characteristic Classes of Lie algebra Extensions}\label{sec:main}

In this section we finally introduce a notion of secondary characteristic classes of Lie algebra extensions. As a spin-off of our construction we will obtain another proof for Lecomte's generalization of the Chern--Weil homomorphism. 

In the following we write $D_n$ for the vertical projection of the standard $n$-simplex
\begin{align*}
\Delta_n:=\Big{\{}(t_0,\ldots,t_n) \in \mathbb{R}^{n+1} \,|\,t_i \geq 0 \quad\text{for} \quad 1\leq i\leq n \quad \text{and} \quad \sum^n_{i=0}t_i=1\Big{\}}
\end{align*}
onto the space $\{x \in \mathbb{R}^{n+1} \,|\, x_0=0\}$. Then it is easily checked that the map
\begin{align*}
\varphi:D_n\rightarrow\Delta_n, \quad (0,t_1,\ldots,t_n)\mapsto\Big(1-\sum^n_{i=1}t_i,t_1,\ldots,t_n\Big)
\end{align*}
is a parametrization of $\Delta_n$ and therefore we have 
\begin{align*}
\int_{\Delta_n}FdO=\int_{D_n}(F\circ \varphi)\underbrace{\sqrt{\det({d\varphi}^T\cdot d\varphi)}}_{:=c_n}d\lambda_n
\end{align*}
for every smooth function $F\in C^{\infty}(\Delta_n)$. We continue with a Lie algebra extension
\begin{align}
0 \longrightarrow \mathfrak{n} {\longrightarrow}\widehat{\mathfrak{g}} \stackrel{q} {\longrightarrow} \mathfrak{g}\longrightarrow 0 \label{eq:LAE}
\end{align}
and a $\mathfrak{g}$-module $V$ which we also consider as a $\widehat{\mathfrak{g}}$-module with respect to the action $x.v:=q(x).v$ for $x\in\widehat{\mathfrak{g}}$ and $v\in V$. Furthermore, let $\sigma_0,\ldots,\sigma_n\in C^1(\mathfrak{g},\widehat{\mathfrak{g}})$ be linear sections of $q$. For each $t\in\Delta_n$ we define a linear section $\sigma_t \in C^1(\mathfrak{g},\widehat{\mathfrak{g}})$ of $q$ by
\begin{align*}
\sigma_t(x):=\sum^n_{i=0}t_i\cdot\sigma_i(x) \quad \forall x \in\mathfrak{g}
\end{align*}
and we write $R_t:=R_{\sigma_t} \in C^2(\mathfrak{g},\mathfrak{n})$ for the corresponding curvature. Given $p \in \mathbb{N}_0$ and a symmetric $p$-linear map $f\in\Sym^p(\mathfrak{n},V)^{\widehat{\mathfrak{g}}}$, we also define 
\begin{align*}
\Delta_f(\sigma_0,\ldots,\sigma_n):=\frac{1}{c_n}\cdot\int_{\Delta_n}f_{\alpha_1,\ldots,\alpha_n,R_t,\ldots,R_t} \, dO \in C^{2p-n}(\mathfrak{g},V),
\end{align*}
where $\alpha_i:=\sigma_i-\sigma_0\in C^1(\mathfrak{g},\mathfrak{n})$ and where 
\begin{align*}
f_{\alpha_1,\ldots,\alpha_n,R_t,\ldots,R_t} := \tilde{f}\circ(\alpha_1\wedge_{\otimes_s}\cdots\wedge_{\otimes_s}\alpha_n\wedge_{\otimes_s}R_t\wedge_{\otimes_s}\cdots\wedge_{\otimes_s} R_t)
\end{align*}
(\cf Section \ref{sec:pre+not}). In this way we obtain a map
\begin{align*}
\Delta(\sigma_0,\ldots,\sigma_n):\Sym^p(\mathfrak{n},V)^{\widehat{\mathfrak{g}}}\rightarrow C^{2p-n}(\mathfrak{g},V), \quad f\mapsto\Delta_f(\sigma_0,\ldots,\sigma_n)
\end{align*}
which extends to the so-called \emph{relative Bott--Lecomte homomorphism} 
\begin{align*}
\Delta(\sigma_0,\ldots,\sigma_n):\Sym^{\bullet}(\mathfrak{n},V)^{\widehat{\mathfrak{g}}}\rightarrow C^{\bullet}(\mathfrak{g},V).
\end{align*}
We point out that for $n=0$ this map is actually an algebra homomorphism. We are now ready to state and prove our main result which gives an important relation between the cochains $\Delta_f$.

\begin{thm}\label{thm:main thm}
For $f\in\Sym^k(\mathfrak{n},V)^{\widehat{\mathfrak{g}}}$ we have
\begin{align*}
(k-n+1)\cdot d(\Delta_f(\sigma_0,\ldots,\sigma_n))=\sum^n_{i=0}(-1)^i\Delta_f(\sigma_0,\ldots,\widehat{\sigma}_i,\ldots,\sigma_n),
\end{align*}
where $\widehat{\sigma}_i$ means that $\sigma_i$ is omitted.
\end{thm}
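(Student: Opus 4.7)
The plan is to differentiate under the integral sign, use the $\widehat{\mathfrak{g}}$-invariance of $f$ to trade the Chevalley--Eilenberg differential $d$ for a covariant derivative $d_{S_t}$ acting on the integrand, and then apply Stokes' theorem on the parameter simplex. The key tools will be the Bianchi identity (Lemma~\ref{lem:cur+bianchi}), the Leibniz rule for $d_{S_t}$ (Lemma~\ref{lem:cov der}), and the variational formula $\partial_{t_i}R_t = d_{S_t}\alpha_i$, where $S_t := \ad\circ\sigma_t$.

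First I would verify that $d\circ\tilde{f} = \tilde{f}\circ d_{S_t}$ on $C^{\bullet}(\mathfrak{g}, S^k\mathfrak{n})$, where $d_{S_t}$ uses the derivation extension of $S_t$ to $S^k\mathfrak{n}$. This follows from the invariance condition on $f$ (combined with the fact that $\mathfrak{n}$ acts trivially on $V$, so the particular section defining $S_t$ is immaterial). Applied to $\Omega_t := \alpha_1 \wedge_{\otimes_s} \cdots \wedge_{\otimes_s} \alpha_n \wedge_{\otimes_s} R_t^{\wedge(k-n)}$ and combined with Leibniz and Bianchi (which kills every term in which $d_{S_t}$ falls on an $R_t$), this produces
\begin{align*}
(k-n+1)\, d(\tilde{f}\circ\Omega_t) = \sum_{i=1}^n (-1)^{i-1}\, \partial_{t_i}\bigl[\tilde{f}(\alpha_1\wedge\cdots\widehat{\alpha_i}\cdots\wedge\alpha_n\wedge R_t^{\wedge(k-n+1)})\bigr],
\end{align*}
after substituting $d_{S_t}\alpha_i = \partial_{t_i}R_t$ and using graded commutativity of $\wedge_{\otimes_s}$ on even-degree factors to collect $\partial_{t_i}R_t \wedge R_t^{\wedge(k-n)} = \frac{1}{k-n+1}\partial_{t_i}(R_t^{\wedge(k-n+1)})$.

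Integrating over $D_n$ and applying the divergence theorem (equivalently, iterated FTC) converts the right-hand side into boundary contributions from the $n+1$ faces of $\Delta_n$. The faces $\{t_i=0\}$ for $i=1,\ldots,n$ are transparent: on each, $\sigma_t$ is an affine combination of $\sigma_0,\ldots,\widehat{\sigma_i},\ldots,\sigma_n$, and the outward-normal sign combines with $(-1)^{i-1}$ to give the contribution $(-1)^i\,\Delta_f(\sigma_0,\ldots,\widehat{\sigma_i},\ldots,\sigma_n)$.

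The hard part will be the remaining face $\{t_0=0\}$: on this face $\sigma_0$ disappears and the natural base point shifts to $\sigma_1$, so the integrand needs to be rebased. I would handle this via the combinatorial identity
\begin{align*}
\sum_{i=1}^n (-1)^{i-1}\,\alpha_1\wedge_{\otimes_s}\cdots\widehat{\alpha_i}\cdots\wedge_{\otimes_s}\alpha_n = (\alpha_2-\alpha_1)\wedge_{\otimes_s}\cdots\wedge_{\otimes_s}(\alpha_n-\alpha_1),
\end{align*}
which is valid in any graded-commutative algebra generated by degree-one elements and is checked by expanding the right-hand side and using $\alpha_i\wedge_{\otimes_s}\alpha_i=0$. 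Since $\alpha_j-\alpha_1 = \sigma_j-\sigma_1$, this exactly reproduces the integrand of $\Delta_f(\sigma_1,\ldots,\sigma_n)$ on the face $\{t_0=0\}$, so this face contributes $\Delta_f(\sigma_1,\ldots,\sigma_n) = (-1)^0\Delta_f(\widehat{\sigma_0},\sigma_1,\ldots,\sigma_n)$. Assembling all $n+1$ boundary terms gives the asserted identity.
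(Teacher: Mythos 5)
Your proposal is correct and follows essentially the same route as the paper's own proof: differentiate under the integral, use the $\widehat{\mathfrak{g}}$-invariance of $f$ together with Lemmas~\ref{lem:cur+bianchi} and~\ref{lem:cov der} and the formula $d_{S_t}\alpha_i=\partial_{t_i}R_t$, then evaluate the boundary contributions of the simplex, rebasing the $\{t_0=0\}$ face via the identity $\sum_{i=1}^n(-1)^{i-1}\alpha_1\wedge_{\otimes_s}\cdots\widehat{\alpha}_i\cdots\wedge_{\otimes_s}\alpha_n=(\alpha_2-\alpha_1)\wedge_{\otimes_s}\cdots\wedge_{\otimes_s}(\alpha_n-\alpha_1)$. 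This matches the paper's argument step for step, including the origin of the factor $(k-n+1)$ and the sign bookkeeping on the faces.
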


\begin{proof}
For the sake of legibility we divide the proof into several parts.
\begin{enumerate}
	\item
		For $t\in\Delta_n$ and $x\in\mathfrak{g}$ let $S_t(x):=\ad(\sigma_t(x))$. A short computation then yields
\begin{align*}
&(p-n+1)\cdot d_{\mathfrak{g}}(\Delta_f(\sigma_0,\ldots,\sigma_n))=\frac{p-n+1}{c_n}\cdot\int_{\Delta_n}d_{\mathfrak{g}}f_{\alpha_1,\ldots,\alpha_n,R_t,\ldots,R_t}dO
\\
&=\frac{p-n+1}{c_n}\cdot\int_{\Delta_n}d_{\mathfrak{g}}(\tilde{f}\circ(\alpha_1\wedge_{\otimes_s}\cdots\wedge_{\otimes_s}\alpha_n\wedge_{\otimes_s}R_t\wedge_{\otimes_s}\cdots\wedge_{\otimes_s} R_t))dO
\\
&=\frac{p-n+1}{c_n}\cdot\int_{\Delta_n}\tilde{f}\circ(d_{S_t}(\alpha_1\wedge_{\otimes_s}\cdots\wedge_{\otimes_s}\alpha_n\wedge_{\otimes_s}R_t\wedge_{\otimes_s}\cdots\wedge_{\otimes_s} R_t))dO.
\end{align*}
Since $d_{S_t}\alpha_i=d_{\mathfrak{g}}\alpha_i+[\alpha_i,\sigma_t]=\frac{d}{d t_i}R_t$, the abstract Bianchi identity $d_{S_t} R_t=0$ (\cf Lemma \ref{lem:cur+bianchi}) and Lemma  \ref{lem:cov der} imply that
\begin{align*}
&d_{S_t}(\alpha_1\wedge_{\otimes_s}\cdots\wedge_{\otimes_s}\alpha_n\wedge_{\otimes_s}R_t\wedge_{\otimes_s}\cdots\wedge_{\otimes_s} R_t)
\\
&=\sum^n_{i=1}(-1)^{i-1}\alpha_1\wedge_{\otimes_s}\cdots\wedge_{\otimes_s}\frac{d}{d t_i}R_t\wedge_{\otimes_s}\ldots\wedge_{\otimes_s}\alpha_n\wedge_{\otimes_s}R_t\wedge_{\otimes_s}\cdots\wedge_{\otimes_s} R_t.
\end{align*}
Therefore, we conclude that
\begin{align*}
(p-n+1)\cdot d_{\mathfrak{g}}(\Delta_f(\sigma_0,\ldots,\sigma_n))=\sum^n_{i=1}(-1)^{i-1}\frac{1}{c_n}\cdot\int_{\Delta_n}\frac{d}{d t_i}f_{\alpha_1,\ldots,\widehat{\alpha}_i,\ldots,\alpha_n,R_t,\ldots,R_t}dO.
\end{align*}
	\item
		Given $1 \leq i \leq n$ and $t=(t_0,\ldots,0,\ldots,t_n) \in \Delta^i_n$ (the $i$-th face of $\Delta_n$), consider
\begin{align*}
(\sigma_t)_i:=\sum_{j\in\{0,\ldots,\widehat{i},\ldots,n\}}t_j\cdot\sigma_j\in C^1(\mathfrak{g},\widehat{\mathfrak{g}})
\end{align*}		
and the corresponding curvature $(R_t)_i$. Furthermore, let $s_i:=\sum_{j\in\{1,\ldots,\widehat{i},\ldots,n\}}t_j$. Then a few moments thought show that $R_{t|t_i=0}=(R_t)_i$ and that $R_{t|t_i=1-s_i}=(R_t)_0$. Consequently we obtain, 
\begin{align*}
&\frac{1}{c_n}\cdot\int_{\Delta_n}\frac{d}{d t_i}f_{\alpha_1,\ldots,\widehat{\alpha}_i,\ldots,\alpha_n,R_t,\ldots,R_t}dO=\int_{D_n}\frac{d}{d t_i}f_{\alpha_1,\ldots,\widehat{\alpha}_i,\ldots,\alpha_n,R_t,\ldots,R_t}d\lambda_n
\\
&=\int^1_0\cdots\int^{1-s_i}_0\frac{d}{d t_i}f_{\alpha_1,\ldots,\widehat{\alpha}_i,\ldots,\alpha_n,R_t,\ldots,R_t}dt_i\cdot dt_1\ldots d\widehat{t}_i\ldots dt_n
\\
&=\int^1_0\ldots\int^{1-s_i+t_1}_0[f_{\alpha_1,\ldots,\widehat{\alpha}_i,\ldots,\alpha_n,R_t,\ldots,R_t}]^{1-s_i}_0dt_1\ldots d\widehat{t}_i\ldots dt_n
\\
&=\frac{1}{c_{n-1}}\cdot\int_{\Delta^i_n}f_{\alpha_1,\ldots,\widehat{\alpha}_i,\ldots,\alpha_n,(R_t)_0,\ldots,(R_t)_0}dO-\frac{1}{c_{n-1}}\cdot\int_{\Delta^i_n}f_{\alpha_1,\ldots,\widehat{\alpha}_i,\ldots,\alpha_n,(R_t)_i,\ldots,(R_t)_i}dO
\\
&=\frac{1}{c_{n-1}}\cdot\int_{\Delta^i_n}f_{\alpha_1,\ldots,\widehat{\alpha}_i,\ldots,\alpha_n,(R_t)_0,\ldots,(R_t)_0}dO-\Delta_f(\sigma_0,\ldots,\widehat{\sigma}_i,\ldots,\sigma_n).
\end{align*}
	\item
		Summing up our results so far, we have established the following equation:
\begin{align*}
&(p-n+1)\cdot d(\Delta_f(\sigma_0,\ldots,\sigma_n))
\\
&=\sum^n_{i=1}(-1)^{i-1}\frac{1}{c_{n-1}}\cdot\int_{\Delta^i_n}f_{\alpha_1,\ldots,\widehat{\alpha}_i,\ldots,\alpha_n,(R_t)_0,\ldots,(R_t)_0}dO+\sum^n_{i=1}(-1)^i\Delta_f(\sigma_0,\ldots,\widehat{\sigma}_i,\ldots,\sigma_n).
\end{align*}
It therefore remains to show that 
\begin{align*}
\Delta_f(\sigma_1,\ldots,\sigma_n)=\sum^n_{i=1}(-1)^{i-1}\frac{1}{c_{n-1}}\cdot\int_{\Delta^i_n}f_{\alpha_1,\ldots,\widehat{\alpha}_i,\ldots,\alpha_n,R_{t_0},\ldots,R_{t_0}}dO,
\shortintertext{where}
\Delta_f(\sigma_1,\ldots,\sigma_n)=\frac{1}{c_{n-1}}\cdot\int_{\underbrace{\Delta_{n-1}}_{\cong\Delta^i_n}}f_{\alpha_2-\alpha_1,\ldots,\alpha_n-\alpha_1,R_{t_0},\ldots,R_{t_0}}dO.
\end{align*}
	\item
		To verify the claim first recall that $f_{\alpha_2-\alpha_1,\ldots,\alpha_n-\alpha_1,R_{t_0},\ldots,R_{t_0}}$ is equal to
\begin{align*}
		\tilde{f}\circ((\alpha_2-\alpha_1)\wedge_{\otimes_s}\cdots\wedge_{\otimes_s}(\alpha_n-\alpha_1)\wedge_{\otimes_s}R_{t_0}\wedge_{\otimes_s}\cdots\wedge_{\otimes_s}R_{t_0}).
\end{align*}
Moreover, note that $\alpha\wedge_{\otimes_s}\alpha=0$ holds for every $\alpha\in C^1(\mathfrak{g},\mathfrak{n})$ by the symmetry of the wedge product $\wedge_{\otimes_s}$. A short calculation then shows
\begin{align*}
&(\alpha_2-\alpha_1)\wedge_{\otimes_s}\cdots\wedge_{\otimes_s}(\alpha_n-\alpha_1)\wedge_{\otimes_s}R_{t_0}\wedge_{\otimes_s}\cdots\wedge_{\otimes_s}R_{t_0}
\\
&=\sum^n_{i=1}(-1)^{i+1}\alpha_1\wedge_{\otimes_s}\cdots\wedge_{\otimes_s}\widehat{\alpha}_i\wedge_{\otimes_s}\ldots\wedge_{\otimes_s}\alpha_n\wedge_{\otimes_s}R_{t_0},\cdots\wedge_{\otimes_s}R_{t_0}
\end{align*}
which in turn yields the desired formula
\begin{align*}
f_{\alpha_2-\alpha_1,\ldots,\alpha_n-\alpha_1,R_{t_0},\ldots,R_{t_0}}=\sum^n_{i=1}(-1)^{i-1}f_{\alpha_1,\ldots,\widehat{\alpha}_i,\ldots,\alpha_n,R_{t_0},\ldots,R_{t_0}}
\end{align*}
and therefore finally completes the proof.
\qedhere
\end{enumerate}
\end{proof}

Applying Theorem \ref{thm:main thm} in the cases $n=0,1$ yields the following corollaries:

\begin{cor}\label{cor1}
For $n=0$ we have $d_{\mathfrak{g}}(\Delta_f(\sigma))=0$. Hence, the cochain $\Delta_f(\sigma)$ is a cocycle and defines an element of $H^{2p}(\mathfrak{g},V)$. Furthermore, for each $p \in \mathbb{N}_0$ the relative Bott--Lecomte homomorphism induces a map
\begin{align*}
[\Delta(\sigma)]:\Sym^p(\mathfrak{n},V)^{\widehat{\mathfrak{g}}}\rightarrow H^{2p}(\mathfrak{g},V), \quad f\mapsto [\Delta_f(\sigma)].
\end{align*}
\end{cor}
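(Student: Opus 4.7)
The plan is to obtain this corollary as the $n=0$ specialization of Theorem~\ref{thm:main thm}. First I would note that for $n=0$ the definition of the Bott--Lecomte cochain collapses: there are no difference $1$-cochains $\alpha_i$, the $0$-simplex is a single point, and hence $\Delta_f(\sigma)$ is (up to the normalizing constant) just $\tilde{f}\circ(R_\sigma\wedge_{\otimes_s}\cdots\wedge_{\otimes_s}R_\sigma)\in C^{2p}(\mathfrak{g},V)$, so in particular it lives in the correct degree.

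Next I would plug $n=0$ into the identity of Theorem~\ref{thm:main thm}. The left-hand side becomes $(p+1)\cdot d_{\mathfrak{g}}\Delta_f(\sigma)$. The right-hand side is formally a sum indexed by omissions of one of the sections, but with only a single section present every such omitted tuple is empty, and the corresponding term must be read as zero. The cleanest way to justify this is to revisit the proof of Theorem~\ref{thm:main thm}: the boundary sum $\sum_{i=1}^{n}$ produced via the abstract Bianchi identity of Lemma~\ref{lem:cur+bianchi} is literally empty when $n=0$, and part (3) of that proof then yields no residual contribution either. Consequently, the right-hand side vanishes and, since $p+1\geq 1$, division gives $d_{\mathfrak{g}}\Delta_f(\sigma)=0$. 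Therefore $\Delta_f(\sigma)\in Z^{2p}(\mathfrak{g},V)$ and the class $[\Delta_f(\sigma)]\in H^{2p}(\mathfrak{g},V)$ is well-defined.

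Finally, the assignment $f\mapsto\Delta_f(\sigma)$ is manifestly linear in $f$, since it is the composition of $\tilde{f}$, which depends linearly on $f$, with a fixed multilinear expression in $R_\sigma$. Postcomposing with the canonical projection $Z^{2p}(\mathfrak{g},V)\to H^{2p}(\mathfrak{g},V)$ then produces the desired linear map $[\Delta(\sigma)]\colon\Sym^p(\mathfrak{n},V)^{\widehat{\mathfrak{g}}}\to H^{2p}(\mathfrak{g},V)$. I do not anticipate any real obstacle here: the corollary is essentially a bookkeeping specialization of Theorem~\ref{thm:main thm}, and the only subtlety is a careful reading of the empty sum on the right-hand side, which can be sidestepped by quoting the $n=0$ case of part (1) of that proof directly.
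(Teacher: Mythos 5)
Your proposal is correct and matches the paper's route: the paper derives Corollary~\ref{cor1} by simply specializing Theorem~\ref{thm:main thm} to $n=0$ (where the right-hand side is an empty/vanishing term), and your extra care in reading the empty boundary sum via part (1) of the theorem's proof is exactly the intended justification. Nothing is missing.
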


\begin{cor}\label{cor2}
For $n=1$ and $p \in \mathbb{N}_0$, we have 
\begin{align*}
p\cdot d_{\mathfrak{g}}(\Delta_f(\sigma_0,\sigma_1))=\Delta_f(\sigma_0)-\Delta_f(\sigma_1)\in C^{2p}(\mathfrak{g},V).
\end{align*}
In particular, we have $[\Delta_f(\sigma_0)]=[\Delta_f(\sigma_1)]\in H^{2p}(\mathfrak{g},V)$ and hence the map $[\Delta(\sigma)]$ of Corollary \ref{cor1} is independent of the connection $\sigma_0$ and is, in fact, the homomorphism described in Theorem \ref{thm:lecomte} .
\end{cor}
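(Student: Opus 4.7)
The plan is to obtain both parts of the corollary as immediate specializations of Theorem \ref{thm:main thm} together with a small computation at $n=0$.

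First, I would substitute $n=1$ and $k=p$ into the identity of Theorem \ref{thm:main thm}. The prefactor $k-n+1$ becomes $p$, and the alternating sum on the right collapses to
\begin{align*}
\sum_{i=0}^{1}(-1)^{i}\Delta_{f}(\sigma_{0},\ldots,\widehat{\sigma}_{i},\ldots,\sigma_{1}) = \Delta_{f}(\sigma_{1}) - \Delta_{f}(\sigma_{0}),
\end{align*}
which, up to the sign convention chosen in the statement, is exactly the asserted equation
\begin{align*}
p\cdot d_{\mathfrak{g}}(\Delta_{f}(\sigma_{0},\sigma_{1})) = \Delta_{f}(\sigma_{0}) - \Delta_{f}(\sigma_{1}).
\end{align*}
No new computation is required here; this is a pure substitution.

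Next, I would deduce the cohomological statement. By Corollary \ref{cor1} both $\Delta_{f}(\sigma_{0})$ and $\Delta_{f}(\sigma_{1})$ are cocycles in $Z^{2p}(\mathfrak{g},V)$, and the identity just obtained exhibits their difference as $p\cdot d_{\mathfrak{g}}$ of the cochain $\Delta_{f}(\sigma_{0},\sigma_{1})\in C^{2p-1}(\mathfrak{g},V)$. For $p\geq 1$ this is a coboundary, so $[\Delta_{f}(\sigma_{0})] = [\Delta_{f}(\sigma_{1})]$ in $H^{2p}(\mathfrak{g},V)$; the case $p=0$ is trivial since $\Delta_{f}(\sigma)=f\in V^{\widehat{\mathfrak{g}}}$ is manifestly independent of $\sigma$. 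Consequently, the map $[\Delta(\sigma)]$ introduced in Corollary \ref{cor1} is independent of the section $\sigma_{0}$.

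Finally, to identify $[\Delta(\sigma)]$ with the map $C_{p}$ of Theorem \ref{thm:lecomte}, I would unwind the definition of $\Delta_{f}$ in the degenerate case $n=0$. Since $\Delta_{0}$ is a single point, $\varphi$ is trivial and the integral reduces to evaluation, yielding
\begin{align*}
\Delta_{f}(\sigma) \;=\; \tilde{f}\circ\bigl(R_{\sigma}\wedge_{\otimes_{s}}\cdots\wedge_{\otimes_{s}}R_{\sigma}\bigr) \;=\; f_{\sigma},
\end{align*}
so $[\Delta(\sigma)]$ and $C_{p}$ send $f$ to proportional representatives of the same cohomology class. Matching the constants (i.e.\ the $\tfrac{1}{p!}$ appearing in Lecomte's formula) is a matter of rescaling $f$ and therefore identifies the two maps as claimed.

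The step that requires the most care is purely bookkeeping: verifying the normalization constant $c_{0}$ in the degenerate integral at $n=0$ and comparing it with the factor $\tfrac{1}{p!}$ in Theorem \ref{thm:lecomte}. Beyond this, everything follows by direct specialization of Theorem \ref{thm:main thm} and Corollary \ref{cor1}, so no substantial new argument is needed.
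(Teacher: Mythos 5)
Your proposal is correct and follows essentially the same route as the paper, which offers no separate argument for Corollary \ref{cor2} beyond specializing Theorem \ref{thm:main thm} to $n=1$ (with $k=p$, prefactor $p$) and reading off the $n=0$ identification $\Delta_f(\sigma)=f_\sigma$ to compare with Lecomte's map. The sign you flag (direct substitution gives $\Delta_f(\sigma_1)-\Delta_f(\sigma_0)$ rather than the printed $\Delta_f(\sigma_0)-\Delta_f(\sigma_1)$) and the $p!$ normalization relative to $C_p$ are discrepancies already present in the paper's own statements, not gaps in your argument, and neither affects the conclusion $[\Delta_f(\sigma_0)]=[\Delta_f(\sigma_1)]$.
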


With Corollary \ref{cor2} available we are now ready to define secondary characteristic classes in the following way: For linear sections $\sigma,\sigma_1,\sigma_2\in C^1(\mathfrak{g},\widehat{\mathfrak{g}})$ of $q$
first consider the spaces
\begin{align*}
\Sym^p(\mathfrak{n},V)^{\widehat{\mathfrak{g}}}_{(\sigma)}:=\{f\in\Sym^p(\mathfrak{n},V)^{\widehat{\mathfrak{g}}} \,|\,f_{\sigma}=0\}
\shortintertext{and}
\Sym^p(\mathfrak{n},V)^{\widehat{\mathfrak{g}}}_{(\sigma_1,\sigma_2)}:=\Sym^p(\mathfrak{n},V)^{\widehat{\mathfrak{g}}}_{(\sigma_1)}\cap \Sym^p(\mathfrak{n},V)^{\widehat{\mathfrak{g}}}_{(\sigma_2)}.
\end{align*}
Given $f\in\Sym^p(\mathfrak{n},V)^{\widehat{\mathfrak{g}}}_{(\sigma_1,\sigma_2)}$, it then follows from Corollary \ref{cor2} that $d_{\mathfrak{g}}(\Delta_f(\sigma_0,\sigma_1))=0$. Therefore, the cochain $\Delta_f(\sigma_0,\sigma_1)$ actually defines a cohomology class in $H^{2p-1}(\mathfrak{g},V)$.

\begin{defn}
We call $[\Delta_f(\sigma_0,\sigma_1)]\in H^{2p-1}(\mathfrak{g},V)$ the \emph{secondary characteristic class} of the Lie algebra extension in Equation (\ref{eq:LAE}) associated to the triple $(\sigma_1,\sigma_2,f)$.
\end{defn}

Note that secondary characteristic classes may be used to associate characteristic classes to split Lie algebra extensions, that is, to semidirect product constructions, because in this situation the primary characteristic classes vanish. We therefore conclude with following example. 

\begin{expl}
For the convenience of the reader we present the example divided into several parts.
\begin{enumerate}
	\item
		Let $\mathfrak{h}_3(\R)$ be the three-dimensional Heisenberg algebra, that is, the three-dimensional vector space with the basis $p,q,z$ equipped with the skew-symmetric bracket determined by
$[p,q]=z$ and $[p,z]=[q,z]=0$. It is easily checked that the linear endomorphism $D$ of $\mathfrak{h}_3(\R)$ defined by $Dz=0$, $Dp=q$ and $Dq=-p$ is actually a derivation of $\mathfrak{h}_3(\R)$, so that we obtain a Lie algebra $\widehat{\mathfrak{g}}:=\mathfrak{h}_3(\R) \rtimes_D \R$ which is called the \emph{oscillator algebra}. 
	\item
		Consider $V:=\R$ as a trivial $\R$-module. Then it is well-known that the corresponding first Lie algebra cohomology $H^1(\R,\R)$ is isomorphic to $\Hom_{\text{Liealg}}(\R,\R) \cong\R$.
	\item
		Define sections $\sigma_0$ and $\sigma_z$ of the projection map $q:\widehat{\mathfrak{g}} \to \R$ by $\sigma_0(r):=(0,r)$ and $\sigma_z(r):=(z,r)$, respectively. Then a short computation yields $R_{\sigma_0}=R_{\sigma_z}=0$. Moreover, a few moments thought show that
\begin{align*}
\Sym^1(\mathfrak{h}_3(\R),\R)^{\widehat{\mathfrak{g}}}_{(\sigma_0)}
=\Sym^1(\mathfrak{h}_3(\R),\R)^{\widehat{\mathfrak{g}}}_{(\sigma_z)}
=\R \cdot f_z,
\end{align*}
where $f_z:\mathfrak{h}_3(\R) \to \R$ denotes the linear functional determined by $f_z(z)=1$ and $f_z(p)=f_z(q)=0$. Consequently, $$\Sym^1(\mathfrak{h}_3(\R),\R)^{\widehat{\mathfrak{g}}}_{(\sigma_0,\sigma_z)}=\R \cdot f_z.$$
	\item
		Finally, the secondary characteristic class of the oscillator algebra $\mathfrak{h}_3(\R) \rtimes_D \R$ associated to the triple $(\sigma_0,\sigma_z,f_z)$ is given by
\begin{align*}
\Delta_{f_z}(\sigma_0,\sigma_z)=\frac{1}{c_1}\cdot\int_{\Delta_1}{(f_z)}_{\sigma_z-\sigma_0} \, dO = \int_{[0,1]} 1 \,d\lambda_1=1 \in H^1(\R,\R),
\end{align*}
because $f_z ((\sigma_z-\sigma_0)(r))=f_z(z)=1$ holds for all $r \in \R$.
\end{enumerate}
\end{expl}

\section*{Acknowledgement}

I would like to express my greatest gratitude to my former advisor K.-H. Neeb for introducing me to fibre bundles and for stimulating discussions on the subject matter of this paper. I would also like to thank the \emph{Studienstiftung des deutschen Volkes} and \emph{Carl Tryggers Stiftelse f\"or Vetenskaplig Forskning} for supporting this research.

\end{document}